\documentclass[11pt]{amsart}
\setlength\hfuzz{2pt}
\setlength\vfuzz{2pt}
\usepackage{amssymb,amsmath,latexsym,enumerate, dsfont}
\usepackage{graphicx,verbatim,enumerate,%
ifpdf,mdwlist}
\usepackage{color}
\usepackage{mathabx}
\ifpdf
\usepackage{hyperref}%[unicode]{hyperref}
\else
\usepackage[hypertex]{hyperref}
\fi
\usepackage{graphicx}
\usepackage{cancel}
\usepackage{float}
%%%%%%%% My Full Page %%%%%%%%
%\setlength{\oddsidemargin}{-0.1in}
%\setlength{\evensidemargin}{-0.1in}
%\setlength{\textwidth}{6.5in}
%\setlength{\topmargin}{-.3in}
%\setlength{\textheight}{9.6in}
%%%%%%%%%%%%%%%%%%%%%%%%%%%%%%%
%\setlength{\parindent}{0pt}%
%\setlength{\parskip}{0.2cm}
%%%%%%%%%%%%%%%%%%%%%%%%%%%%%%
\renewcommand{\phi}{\varphi}

\newtheorem{theorem}{Theorem}[section]

\newtheorem{lemma}[theorem]{Lemma}

\newtheorem{proposition}[theorem]{Proposition}

\newtheorem{defi}[theorem]{Definition}

\newtheorem{exa}[theorem]{Example}

\newtheorem{rem}[theorem]{Remark}
\newtheorem{claim}[theorem]{Claim}

\newcommand{\IsoT}{\operatorname{IsoType}}

\newcommand{\A}{\mathcal{A}}

\newcommand{\set}[1]{\{{#1}\}}
\newcommand{\rel}[1]{\mathrel{#1}}

\title[Isomorphism types of equivalence structures]{Comparing the isomorphism types of  equivalence structures and preorders}

\author[N.~Bazhenov]{Nikolay Bazhenov}
\address{Sobolev Institute of Mathematics, pr. Akad. Koptyuga 4, Novosibirsk,
630090 Russia;\
Novosibirsk State University, ul. Pirogova 2, Novosibirsk, 630090 Russia
}
\email{\href{mailto:bazhenov@math.nsc.ru}{bazhenov@math.nsc.ru}}

\author[L.~San Mauro]{Luca San Mauro}
\address{Institute of Discrete Mathematics and Geometry, Vienna University of Technology, Vienna, Austria}
\email{\href{mailto:luca.san.mauro@tuwien.ac.at}{luca.san.mauro@tuwien.ac.at}}

\thanks{
Bazhenov was supported by the program of fundamental scientific researches of the SB~RAS No.~I.1.1, project No.~0314-2019-0002.
San Mauro was supported by the Austrian Science Fund FWF, project~M~2461}

\subjclass[2010]{03D45}

\keywords{Equivalence structures, preorders, Ershov hierarchy, arithmetical hierarchy, analytical hierarchy, isomorphism types}

\begin{document}
\begin{abstract}
A general theme of computable structure theory is to investigate when structures have copies of a given complexity $\Gamma$. We discuss such problem for the case of equivalence structures and preorders. We show that there is a $\Pi^0_1$ equivalence structure with no $\Sigma^0_1$ copy, and in fact that the isomorphism types realized by the $\Pi^0_1$ equivalence structures coincide with those realized by the $\Delta^0_2$ equivalence structures. We also construct a $\Sigma^0_1$ preorder with no $\Pi^0_1$ copy.
\end{abstract}

\maketitle

\section{Introduction}
A primary question of computable structure theory is to ask,  given a familiar class of countable structures $\mathfrak{A}$,  which of the isomorphism types of $\mathfrak{A}$ can be realized by computable structures. Sometimes the answer is trivial:
for instance, \emph{all} countable vector spaces over $\mathbb{Q}$ have a computable copy~\cite{nurtazin1974strong}; on the other hand,  Peano Arithmetic has only \emph{one} computable model~\cite{tennenbaum1959non}. In other cases, such as abelian $p$-groups,  one (partially)  answers in terms of a class of invariants~\cite{khisamiev1998constructive}. But many natural classes (e.g., linear orders, Boolean algebras, or the class of all graphs) are simply too rich to hope for a nice characterization of which members of the class have a computable copy.

An obvious way of generalizing the above problem is to investigate when structures have copies of some complexity $\Gamma$~--- where $\Gamma$ is  some level of the arithmetical, hyperarithmetical, analytical, or Ershov hierarchy~--- and then studying the relation between the isomorphism types realized by structures of different complexity. 
%Such a relation often turns out to be interesting. In particular, 
Interestingly, it can be the case that all the isomorphism types realized by structures of a certain complexity are already realized by structures of considerably less complexity: e.g., Spector famously proved that every hyperarithmetic ordinal has a computable copy~\cite{spector1955recursive}. In this paper, we contribute to this research thread by focusing on two case-studies: equivalence structures   and preorders.

Equivalence structures are among the simplest structures which are nontrivial. For this reason, they form a class  which is often  reasonably tame but still worth examining. In fact, despite of their structural simplicity, equivalence structures have also remarkably deep effective properties: %Let us give a couple of examples.
 Calvert, Cenzer, Harizanov, and Morozov~\cite{calvert2006effective} proved that the index set of the computable equivalence structures isomorphic to a given one may be $\Pi^0_4$-complete; Downey, Melnikov, and Ng showed~\cite{downey2017friedberg} that there is an effective list of all computable equivalence structures, thus classifying them in a sense proposed by Goncharov and Knight~\cite{goncharov2002computable}. 
  Cenzer, Harizanov, and Remmel~\cite{cenzer2011sigma10} started the study of the isomorphism types of $\Sigma^0_1$ and $\Pi^0_1$ equivalence structures; our work here extends their study.

Another piece of motivation for focusing on equivalence structures comes from the fact that measuring the complexity of equivalence relations with domain $\omega$ has been a longstanding endeavour in the literature. In particular, researchers investigated for decades the effective properties of computably enumerable equivalence relations (called \emph{ceers}, after Gao and Gerdes~\cite{gao2001computably}) and they compared them via a natural effectivization of Borel reducibility called \emph{computable reducibility}: see, e.g., 	\cite{andrews2014universal, coskey2012hierarchy, andrews2018joins,ianovski2014complexity, fokina2019measuring}. In comparison with ceers, the study of equivalence relations of higher complexity has been less extensive. Yet, computable reducibility has been used to analyze equivalence relations of various complexity, including some that are not even hyperarithmetical  (such as 
%the minimal equivalence relations of the analytic hierarchy~\cite{bazhenov2019minimal}, and 
the isomorphism relations for familiar classes of computable structures~\cite{fokina2012isomorphism}).
% or such as the equivalence relations which are \emph{minimal} in the analytic hierarchy that we constructed in \cite{}). 
In recent times, following the  work of Ng and Yu~\cite{ng2019degree},  we initiated  a systematic study of $\Delta^0_2$ equivalence relations: we proved that theory of ceers, co-ceers, and $\Delta^0_2$ equivalence relations behave quite differently~\cite{bazhenov2018classifying}. This motivates  the following question (from which the present paper originated):
\begin{center}
\emph{Are ceers and co-ceers distinguishable in terms of the isomorphism types that they realize?}
\end{center} 

Theorem \ref{thm:coceervsceer} below positively answers to such question. In fact, we give a full description of the comparison between the isomorphism types realized at different levels of the arithmetical hierarchy: in particular, we show that for a non-zero $n$, the isomorphism types of $\Delta^0_{n+1}$ equivalence structures are precisely the same as those of $\Pi^0_n$ equivalence structures (Theorems~\ref{theo:pi01-delta02} and~\ref{theo:arithmetical}). Furthermore, we separate different levels of the analytical hierarchy via the corresponding realizable types of equivalence structures (Proposition~\ref{prop:analytical}).
%%%--NEW
 
Our second case-study is also inspired by the research on computable reducibility. Preorders occur naturally in logic (e.g., think of the structure induced by $\vdash_{PA}$ on the formulas of arithmetic) and, in fact, their  complexity  has been  studied since the 1980s~\cite{montagna1985universal} (see \cite{andrews2019effective} for more recent results). By Theorem \ref{thm:sigma1vspi1preord} below we show that the case of preorders differs from that of the equivalence structures.

%  More generally, preorders are sufficiently similar to equivalence structure. This makes them  a nice class for try to understand  concerning the latter are some special feature of the equivalence structures, or if they reflect a more general phenomenon. 
%
% represent a natural class on which 
%
%
%
%
%The last motivation applies also to our second case-study. Preorders are one of the most natural class to investigate, if one wants to move from equivalence structures to something more general. In fact, their complexity has been studied since the 1980s the most natural step  complexity of preorders has been studies since the 1980's. 

\subsection{Preliminaries on equivalence structures and preorders} 

Equivalence structures (resp.\ preorders) are structures of the form $\A= \langle \omega, R \rangle$ where $R$ is an equivalence relation (a preorder, i.e., a reflexive and transitive relation); $\A\in \Sigma^{i}_{\alpha}$, for $i\in\set{-1,0,1}$, if $R \in \Sigma^{i}_{\alpha}$. Note that, for our interests, assuming that all structures have domain $\omega$ is not a limitation: every isomorphism type of an equivalence structure with a countably infinite domain is realized by an equivalence structure with domain $\omega$. 

We denote by $\mathbb{EQ}$   and by $\mathbb{PO}$ the class of all countable equivalence structures and all countable preorders respectively. Moreover, for any class of structures $\mathbb{K}$, we denote by $\IsoT(\mathbb{K}, \Gamma)$ the collection of all isomorphism types realized by the structures from $\mathbb{K}$ of complexity $\Gamma$.

We denote by $\set{R_e}_{e\in\omega}$ some fixed uniform enumeration of all ceers. We say that an equivalence class $[x]_{R}$ is \emph{older} than an equivalence class $[y]_{R}$ if $\min [x]_R < \min [y]_R$. This relation obviously induces a linear ordering on the $R$-classes.
%Let $\set{R_e}_{e\in\omega}$ be a uniform enumeration of all ceers.
%We say that an equivalence class $[x]_{R}$ is \emph{older} than an equivalence class $[y]_{R}$ if $\min [x]_R < \min [y]_R$. This obviously induces a linear ordering on the $R$-classes.

\subsection{Warning about our terminology}
There is a well-established way of thinking of a \emph{$\Sigma^0_1$} (resp.\ \emph{$\Pi^0_1$}) \emph{presentation} of structure $\A$ as the combination of a  pre-structure $\A^*$ and a ceer (co-ceer) $R$ such that the quotient structure $\A^* /R $ is isomorphic to $\A$. This approach generalizes the way in which many algebraic structures are presented (e.g., the homomorphism theorem says that every countable group is a quotient of the free group on
countably many generators). The study of quotient presentations dates back to the very beginning of computable model theory (see, e.g., the work of Metakides and Nerode
on c.e.\ vector spaces~\cite{metakides1977recursively}) and it gave rise  to a rich research program; the interested reader is referred to \cite{selivanov2003positive,gavruskin2014graphs,fokina2016linear}. However, it should be clear that in this paper we do \emph{not} deal with quotients, but rather with the (pre-)structure themselves. We emphasize this point with one example. Let $R$ be a preorder on $\omega$. One can regard at the algebraic properties of $R$ in two ways:
\begin{itemize}
	\item[(a)] First, considering the quotient structure $\A: = \langle R / supp(R), \leq_R\rangle$, where 	
	\begin{itemize}
	\item $supp(R) := \{ (x,y)\,\colon (x\rel{R}y) \& (y\rel{R} x) \}$;
		\item the domain of $\A$ consists of the equivalence classes $[x]_{supp(R)}$, $x\in \omega$;
		\item and $[x]_{supp(R)} \leq_R [y]_{supp(R)}$ if and only if $(x\rel{R} y)$.
	\end{itemize} 
	\item[(b)] Secondly, considering directly the structure $\langle \omega, R\rangle$.
\end{itemize}

Feiner's celebrated result~\cite{Feiner} that there is \emph{$\Sigma^0_1$-presented} linear ordering which is not isomorphic to any computable one shall be understood within the first setting. On the contrary, in the present paper we will always work according to the second setting. For example, if we say that $R$ is a \emph{$\Sigma^0_1$ linear order}, then this means the following:
\begin{itemize}
	\item $R\in \Sigma^0_1$, and 
	\item the ordering $R$ is linear (in particular, $R$ is an antisymmetric relation).
\end{itemize} Clearly, these conventions imply that in our setting, \emph{every} $\Sigma^0_1$ linear order is computable.

\subsection{Preliminaries on limitwise monotonicity}

The system of invariants which characterize the isomorphism types of computable equivalence structures can be nicely represented in terms of limitwise monotonic functions. 

Let $\mathbf{d}$ be a Turing degree. A function $F\colon \omega \to \omega \cup\{ \infty\}$ is \emph{$\mathbf{d}$-limitwise monotonic} if there is a total $\mathbf{d}$-computable function $f(x,s)$ such that
\begin{itemize}
	\item[(a)] $f(x,s) \leq f(x,s+1)$ for all $x$ and $s$,
	
	\item[(b)] $F(x) = \lim_s f(x,s)$ for all $x$.
\end{itemize}
When one is talking about $\mathbf{0}$-limitwise monotonic functions, the prefix ``$\mathbf{0}$-'' is typically omitted.

 A set $A\subseteq \omega$ is \emph{limitwise monotonic} if either $A = \emptyset$ or there is a limitwise monotonic function $F(x)$ with $range(F) = A$.

\begin{theorem}[Calvert, Cenzer, Harizanov, and Morozov~{\cite{calvert2006effective}}] \label{theo:cchm}
	Let $\mathcal{A}$ be a countable equivalence structure with infinitely many classes. Then the following conditions are equivalent:
	\begin{itemize}
		\item[(a)] $\mathcal{A}$ has a computable copy.
		
		\item[(b)] There is a limitwise monotonic function $F\colon \omega\to \omega\cup \{\infty\}$ such that for any $\kappa\in\omega\cup\{\infty\}$, the structure $\mathcal{A}$ has precisely $card(\{  x\in \omega\,\colon F(x)=\kappa\})$ classes of size $\kappa$.
	\end{itemize}
\end{theorem}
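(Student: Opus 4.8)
The plan is to prove the two implications directly, using the standard fact that two countable equivalence structures are isomorphic if and only if, for every $\kappa\in\omega\cup\{\infty\}$, they have the same number of classes of size $\kappa$. Thus the content of condition (b) is precisely that the limitwise monotonic function $F$ enumerates, with the correct multiplicities, the class sizes of $\A$.

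For (b)$\Rightarrow$(a), I would fix a computable approximation $f(x,s)$ witnessing that $F$ is ($\mathbf{0}$-)limitwise monotonic, so that $f(x,s)\le f(x,s+1)$ and $\lim_s f(x,s)=F(x)$; note we may assume $F(x)\ge 1$ for all $x$, since condition (b) forces $\operatorname{card}(\set{x : F(x)=0})$ to equal the number of classes of $\A$ of size $0$, which is $0$. I then build a computable copy $\B=\langle\omega,R\rangle$ in stages, reserving one prospective class $C_x$ for each $x$ and maintaining the invariant that at the end of stage $s$ the class $C_x$ has exactly $f(x,s)$ elements for every $x\le s$. Whenever $f(x,s)$ exceeds the current size of $C_x$, I enlarge $C_x$ by moving into it the requisite number of least-so-far-unused natural numbers. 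Monotonicity of $f$ in $s$ is exactly what makes this feasible: classes only ever grow, which is all a computable construction can arrange. In the limit $C_x$ has size $F(x)$, so the class sizes of $\B$ have the distribution prescribed by $F$, whence $\B\cong\A$. To see that $R$ is total and computable, the key point is that \emph{every} natural number is eventually placed: since there are infinitely many classes and each $F(x)\ge 1$, the number of placed elements after stage $s$ tends to infinity, and because we always consume the least unused numbers, each $n$ enters some $C_x$ at a finite stage; to decide $R(a,b)$ one then runs the construction until both $a$ and $b$ are placed and checks whether they lie in the same class.

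For (a)$\Rightarrow$(b), I would begin from a computable copy $\langle\omega,R\rangle$ of $\A$ and exploit that, because $R$ is computable, the predicate \emph{``$m$ is the least element of its class''} (i.e.\ $\neg\,(k\rel{R}m)$ for all $k<m$) is decidable, being a conjunction of finitely many computable checks. Hence the set $M$ of class-minima is a computable infinite set; listing it in increasing order as $m_0<m_1<\cdots$ yields a computable bijection between $\omega$ and the collection of $R$-classes. I then set $F(i):=\operatorname{card}([m_i]_R)$ and approximate it by $f(i,s):=\operatorname{card}(\set{y\le s : y\rel{R}m_i})$, which is computable in $(i,s)$, non-decreasing in $s$, and has limit $F(i)$; thus $F$ is limitwise monotonic. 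Since $i\mapsto[m_i]_R$ enumerates every class exactly once, the number of $i$ with $F(i)=\kappa$ equals the number of classes of $\A$ of size $\kappa$, which is what (b) demands.

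The routine parts are the stage-by-stage bookkeeping and the verification that the relation built in the first implication is genuinely an equivalence relation. I expect the main obstacle to lie in (b)$\Rightarrow$(a): ensuring that the domain of the computable copy is \emph{all} of $\omega$ (and not a proper computable subset) while simultaneously realizing the exact class sizes from only a monotone approximation. The resolution is the observation above that infinitely many classes of size at least $1$ generate an unbounded demand for elements, so feeding in the least unused numbers guarantees total coverage of $\omega$ without ever disturbing the target sizes.
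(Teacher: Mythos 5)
The paper states this result as a citation (Calvert--Cenzer--Harizanov--Morozov) and gives no proof of its own, so there is nothing internal to compare against; judged on its own terms, your argument is correct and is essentially the standard proof of this characterization. Both directions are sound: the increasing enumeration of class-minima makes (a)$\Rightarrow$(b) immediate, and in (b)$\Rightarrow$(a) you correctly identify and resolve the only delicate point, namely that $F(x)\geq 1$ together with the existence of infinitely many classes forces the total demand for fresh elements to be unbounded, so that consuming least unused numbers yields a computable equivalence relation on all of $\omega$ with the prescribed character.
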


We conclude this brief discussion about limitwise monotonicity by stating two results that we will use later. The reader is referred to, e.g., \cite{DKT-11} for further results on limitwise monotonic sets and functions.

\begin{theorem}[Harris~{\cite[Lemma~5.2]{Harris}} and, independently, Kach~{\cite[Theorem~1.9]{Kach}}] \label{theo:liminf}
	A function $F\colon \omega\to \omega\cup\{ \infty\}$ is $\mathbf{0}'$-limitwise monotonic if and only if there exists a computable function $g\colon \omega\times\omega\to\omega$ such that
	\[
		F(x) = \lim\inf\!_s\, g(x,s) \text{ for all } x.
	\]
\end{theorem}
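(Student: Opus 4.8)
The plan is to prove the two implications separately, showing that the soft direction (from a computable $\liminf$-presentation to $\mathbf{0}'$-limitwise monotonicity) is essentially bookkeeping, while the reverse direction requires an actual construction.

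First I would handle the implication ``$(\Leftarrow)$'': assume $F(x)=\liminf_s g(x,s)$ for a computable $g\colon\omega\times\omega\to\omega$, and exhibit a $\mathbf{0}'$-computable monotone approximation to $F$. The natural candidate is the \emph{tail minimum}
\[
 f(x,n) := \min_{s\ge n} g(x,s),
\]
which is finite (bounded by $g(x,n)$), nondecreasing in $n$, and satisfies $\lim_n f(x,n)=\sup_n\min_{s\ge n}g(x,s)=\liminf_s g(x,s)=F(x)$ by the very definition of $\liminf$. It remains only to check that $f$ is $\mathbf{0}'$-computable: the predicate $\min_{s\ge n}g(x,s)\le k$ is $\Sigma^0_1$, so $\mathbf{0}'$ can locate the least $k$ for which it holds, and that least $k$ is exactly $f(x,n)$. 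Thus $F$ is $\mathbf{0}'$-limitwise monotonic.

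The substantial direction is ``$(\Rightarrow)$''. Here I would start from a $\mathbf{0}'$-computable $f(x,s)$ witnessing that $F$ is $\mathbf{0}'$-limitwise monotonic, so that $f(\cdot,\cdot)$ is nondecreasing in $s$ with $\sup_s f(x,s)=\lim_s f(x,s)=F(x)$. By Shoenfield's limit lemma there is a computable $\hat f(x,s,t)$ with $\lim_t\hat f(x,s,t)=f(x,s)$, and replacing $\hat f(x,s,t)$ by $\max_{s'\le s}\hat f(x,s',t)$ I may assume $\hat f$ is nondecreasing in $s$ as well. Writing $a_s:=f(x,s)$, the task becomes: build a computable $g$ with $\liminf_s g(x,s)=\sup_s a_s$. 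My plan is a stage construction that scans the columns $s=0,1,2,\dots$ through a \emph{movable marker}, maintaining at each stage a current witness column $w$ together with the value $m=\hat f(x,w,n)$ it presently exhibits; at each stage $g$ outputs $m$, the marker is \emph{promoted} to a new column whenever that column currently beats $m$, and $m$ is \emph{demoted} to the witness's fresh value whenever the witness drops. The point is that transient overshoots of $\hat f$ above the true value $a_s$ only push $g$ temporarily high and are harmless for the $\liminf$, while the demotion step forces $g$ to dip back down to the genuine value $a_w\le F(x)$ once a column settles.

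The verification then splits into the two bounds for $\liminf_s g(x,s)$. For the lower bound $\ge F(x)$ I would use that for each $k\le F(x)$ some column $s_0$ has $a_{s_0}\ge k$; once the marker has passed $s_0$ and the finitely many relevant approximations have converged, $g$ stays $\ge k$. For the upper bound $\le F(x)$ I would use that each individual column has a \emph{finite} true value $a_s\le F(x)$, so its overshoots are transient and the demotion step repeatedly brings $g$ back to values $\le F(x)$. I expect the upper bound to be the main obstacle: because $\hat f$ approximates a $\mathbf{0}'$-computable function, its rate of convergence is not computably bounded, so infinitely many columns may overshoot in a ``rolling'' fashion that keeps some spurious witness available at every stage. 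The crux of the construction is therefore to organize the marker's advancement so that it lingers on each configuration long enough for the finitely many columns under consideration to restabilize after any drop~--- exploiting the monotonicity of $(a_s)$ to guarantee that the settled value is exactly $a_q$ and that it rises to $F(x)$~--- thereby ensuring that the dips produced by demotion occur cofinally and pin the $\liminf$ down to $F(x)$ rather than to some overshoot level above it.
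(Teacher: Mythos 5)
This theorem is quoted in the paper from Harris and Kach without proof, so there is no in-paper argument to compare against; I will assess your proposal on its own terms. Your $(\Leftarrow)$ direction is correct and complete: the tail minimum $f(x,n)=\min_{s\ge n}g(x,s)$ is finite (bounded by $g(x,n)$), nondecreasing in $n$, converges to $\liminf_s g(x,s)$ by definition, and is $\mathbf{0}'$-computable because the predicate $\min_{s\ge n}g(x,s)\le k$ is $\Sigma^0_1$. Nothing to add there.

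The $(\Rightarrow)$ direction, however, has a genuine gap, and it sits exactly where you place ``the crux.'' The single-marker scheme (promote to any column that currently beats $m$, demote when the witness drops) is defeated by the rolling-overshoot scenario you yourself describe: if at every stage some column transiently displays a spuriously large value, the marker can hop from one spurious witness to the next without ever emitting a value $\le F(x)$, pinning $\liminf_s g(x,s)$ at the overshoot level. Your proposed remedy~--- have the marker ``linger on each configuration long enough for the finitely many columns under consideration to restabilize''~--- is not a computable instruction: the modulus of convergence of $\hat f(x,s,\cdot)$ is not computably bounded, so the construction can never certify that a column has restabilized, and any computably determined waiting time can be outlasted by the approximation. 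The missing idea is to replace ``waiting for stabilization'' by a mechanism in which every \emph{observable} injury forces an immediate dip. Concretely: since $f$ is nondecreasing in $s$, one has $F(x)\ge k$ iff $\exists s\,\exists t_0\,\forall t\ge t_0\,\bigl(\hat f(x,s,t)\ge k\bigr)$, so the predicate $F(x)\ge k$ is $\Sigma^0_2$. For each $k$ keep a marker $n_k[u]$ tracking the least pair $\langle s,t_0\rangle\le u$ not yet refuted as a witness of this fact by stage $u$, and set $g(x,u)$ to be the largest $k\le u$ such that no marker $n_{k'}$ with $k'\le k$ moved at stage $u$. If $k\le F(x)$, all markers $n_{k'}$ with $k'\le k$ eventually settle on true witnesses, so $g(x,u)\ge k$ cofinitely; if $k=F(x)+1$, the marker $n_k$ moves infinitely often, and each move forces $g(x,u)\le F(x)$ at that very stage, which is what actually secures the upper bound. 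This reactive mechanism, rather than any form of lingering, is the step your sketch is missing.
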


\begin{theorem}[Khoussainov, Nies, and Shore {\cite[Lemma~2.6]{KNS-97}}; see also Theorem~2.2 in~\cite{DKT-11}]
\label{theo:KNS}
	There is a d.c.e. set $A\subset\omega$, which is not limitwise monotonic.
\end{theorem}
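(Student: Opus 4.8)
The plan is to construct $A$ as a d.c.e.\ set by a finite-injury priority construction meeting, for every $e$, the requirement
\[
R_e:\quad \text{if $\varphi_e$ is total and each column $\varphi_e(x,\cdot)$ is non-decreasing, then } A\neq\range(F_e),
\]
where $\{\varphi_e\}_e$ enumerates the binary partial computable functions and $F_e(x)=\lim_s\varphi_e(x,s)\in\omega\cup\{\infty\}$ (only finite values count toward the range). The guiding observation is that \emph{every} c.e.\ set is limitwise monotonic: given an enumeration $a_0,a_1,\dots$ of a c.e.\ set, the approximation $f(x,s)=0$ for $s\le x$ and $f(x,s)=a_x$ for $s>x$ is non-decreasing and has that set as its range. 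Hence the sole feature a c.e.\ (monotone) presentation cannot imitate is a \emph{retraction}: once a column settles on a value, that value is locked into the range permanently. Passing to d.c.e.\ sets buys exactly one retraction per element, and the construction must spend it to trap each $F_e$.

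First I would isolate the strategy for a single $R_e$. It keeps a current \emph{witness} $c$, taken fresh and large in a region reserved for $R_e$, enumerates $c$ into $A$, and then waits for a stage $s$ and an $x\le s$ with $\varphi_e(x,s)=c$, i.e.\ for some column to \emph{claim} $c$. If no column ever claims $c$, then no column can have limit $c$, so $c\in A\setminus\range(F_e)$ and $R_e$ is met. Otherwise the strategy retracts $c$ (spending its one allowed change) and adopts a strictly smaller witness $c'<c$. Because the witnesses strictly decrease inside a well-ordered region, the strategy changes its witness only finitely often and settles on a final witness $c^{\ast}$; moreover monotonicity guarantees that a single column, once it has climbed to a witness, can never reach a later, smaller one, so the decreasing witnesses genuinely arise from distinct columns. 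If $c^{\ast}$ is never claimed, then $c^{\ast}\in A\setminus\range(F_e)$ and we win outright. Each number is enumerated at most once and extracted at most once, so $A$ remains d.c.e.

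The hard part will be the ways a monotone approximation can \emph{dodge} this scheme, and these are the real content of the proof. When a column claims the current witness $c$ and we retract it, the column may \emph{release} $c$ by climbing strictly above it; if it then rests on a value we have already retracted, or on any point of $R_e$'s region that is not in $A$, that value lands in $\range(F_e)\setminus A$ and we still win, so the witnesses and the reserved region must be laid out so that every possible landing spot is controlled. The more serious dodge is \emph{escape to infinity}: the columns may sweep transiently through the whole reserved region, resting nowhere (so that $A$ and $\range(F_e)$ agree—both empty—on that region) or they may bottom out on a claimed least witness, leaving the strategy with no smaller candidate. The crux is that ``a column rests permanently at the witness'' is a $\Pi^0_2$ event that the computable construction cannot decide, so the strategy cannot simply wait for a genuine claim before retracting. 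Overcoming this is what the priority arrangement is for: nesting the strategies so that each $R_e$ eventually acts in a fresh region below the (finitely many) higher-priority restraints, and arranging the witnesses in a well-founded fashion, so that whichever way $F_e$ dodges, some \emph{permanent} disagreement survives—either a retained witness outside $\range(F_e)$ or a rested value outside $A$. Verifying that this survival is guaranteed, against both forms of dodging simultaneously, is the step I expect to be the main obstacle.
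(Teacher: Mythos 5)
You are proving a statement that the paper itself does not prove --- Theorem~\ref{theo:KNS} is imported from Khoussainov--Nies--Shore (and Downey--Kach--Turetsky) with no argument given --- so there is no in-paper proof to compare against; your proposal has to stand on its own, and as written it does not. The framework is right (requirements $R_e$, the observation that every c.e.\ set is limitwise monotonic, the one-retraction budget of d.c.e.\ sets, and the use of monotonicity to see that strictly decreasing witnesses must be claimed by pairwise distinct columns), and you correctly enumerate the adversary's escapes. But the concrete strategy you describe is not merely hard to verify --- it visibly fails. A strictly decreasing sequence of witnesses in a fixed reserved region terminates after finitely many retractions; the approximation can claim every newly appointed witness with a hitherto unseen column (nothing prevents a fresh column from starting low and climbing to exactly the current witness), and it can release each retracted witness by parking the claiming column at a permanent element of $A$ outside the reserved region (or, under your convention that infinite limits do not count toward the range, by sending it to $\infty$). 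After the region is exhausted, $A$ and $\range(F_e)$ agree on it and the requirement is unmet. So the ``permanent disagreement survives whichever way $F_e$ dodges'' assertion --- which your last sentence defers as ``the main obstacle'' --- is precisely the theorem, and no mechanism in the proposal delivers it. Neither increasing nor decreasing single-witness schemes suffice here; the actual argument needs a further combinatorial idea forcing the adversary into a position where it cannot both cover every surviving element of $A$ and avoid converging to a retracted one, and that idea is absent.

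Two smaller points. First, your convention ``only finite values count toward the range'' is more generous to the adversary than the paper's literal definition (which demands $\range(F)=A\subseteq\omega$, so a single column with limit $\infty$ already kills the function as a witness); adopting the paper's reading would eliminate one of your two dodges for free, and for the application in Theorem~\ref{thm:sigma1vspi1preord} the weaker statement suffices. Second, the sentence ``the strategy changes its witness only finitely often and settles on a final witness $c^{\ast}$'' is true only because the region is finite, and the case where $c^{\ast}$ \emph{is} claimed (which the adversary can always force) is exactly the unresolved case; the clause ``if $c^{\ast}$ is never claimed, we win outright'' covers only the easy half of the dichotomy.
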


\section{Isomorphism types of equivalence structures}

Cenzer, Harizanov, and Remmel~\cite{cenzer2011sigma10} proved that there is a $\Sigma
^0_1$ equivalence structure with no computable copy. It is then natural to  ask whether the $\Sigma
^0_1$ and the $\Pi^0_1$ equivalence structures realize the same isomorphism types. The next theorem shows that this is not the case: $\Pi^0_1$ equivalence structures are, in a sense, more expressive that $\Sigma^0_1$ equivalence structures. 

\begin{theorem}\label{thm:coceervsceer}
There is $\Pi^0_1$ equivalence structure $\A$ which is not isomorphic to any $\Sigma^0_1$ equivalence structure. That is, $\IsoT(\mathbb{EQ}, \Pi^0_1) \not\subseteq \IsoT(\mathbb{EQ}, \Sigma^0_1)$.
\end{theorem}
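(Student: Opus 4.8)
The plan is to isolate a coarse isomorphism invariant---the \emph{set} of finite class sizes---which is forced to be limitwise monotonic for $\Sigma^0_1$ structures, and then to realize a non--limitwise-monotonic such set by a $\Pi^0_1$ structure. First I would prove the auxiliary claim that if $\mathcal{B}=\langle\omega,E\rangle$ is a $\Sigma^0_1$ equivalence structure all of whose classes are finite, then the set $S_{\mathcal B}=\set{\,\Card([x]_E)\colon x\in\omega}$ is limitwise monotonic. Indeed, fixing an enumeration $E_s$ of the ceer $E$ (the reflexive--transitive closure of the pairs enumerated by stage $s$), the function $f(x,s)=\Card(\set{y\le s\colon y\rel{E_s}x})$ is nondecreasing in $s$, because both the window $\set{y\le s}$ and the classes $[x]_{E_s}$ only grow; hence $F(x)=\lim_s f(x,s)=\Card([x]_E)$ is limitwise monotonic and $\range(F)=S_{\mathcal B}$. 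Note that this invariant ignores multiplicities, which is exactly why it can be limitwise monotonic even for $\Sigma^0_1$ structures having no computable copy; this freedom will be exploited on the $\Pi^0_1$ side.

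By Theorem~\ref{theo:KNS}, fix a d.c.e.\ set $A$ that is not limitwise monotonic. Since a non--limitwise-monotonic set is necessarily infinite, and since limitwise monotonicity is preserved under a computable shift, I may replace $A$ by $A+2$ and assume that $A\subseteq\set{n\colon n\ge 2}$ is infinite. As $A$ is $\Delta^0_2$, its increasing enumeration is $\mathbf{0}'$-computable, hence $\mathbf{0}'$-limitwise monotonic; so by Theorem~\ref{theo:liminf} there is a computable $g\colon\omega\times\omega\to\omega$ with $A=\set{\liminf_s g(x,s)\colon x\in\omega}$. The goal is now to build a $\Pi^0_1$ equivalence structure $\mathcal{A}$, all of whose classes are finite, whose set of class sizes is exactly $A$.

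The construction proceeds block by block: $\omega$ is split into computable infinite blocks $B_x$, and inside $B_x$ I aim to produce only classes of size $L_x:=\liminf_s g(x,s)$, so that the sizes occurring across all blocks are exactly $\range(x\mapsto L_x)=A$ and every class is finite. Since $R$ is to be co-c.e., I may only ever \emph{split} classes: starting from each $B_x$ as a single class, I enumerate separations over stages, driving the current class sizes toward $L_x$ as dictated by $g(x,\cdot)$. The \textbf{main obstacle} is precisely that splits are irreversible, whereas $g(x,s)$ may dip below $L_x$ before recovering, so that a premature cut would leave a class of some size $\ell<L_x\notin A$. This is handled by a movable-marker strategy within $B_x$: I maintain one ``open'' class, finalized at the current value $g(x,s)$ only provisionally, and whenever $g(x,\cdot)$ later rises I abandon the too-small provisional class and reopen the construction on a fresh stretch of $B_x$, using that $g(x,s)=L_x$ holds infinitely often while $g(x,s)\ge L_x$ holds cofinitely to guarantee that cofinitely many finalized classes have size exactly $L_x$ and that the finitely many spoiled remnants can be disposed of into size-$L_x$ classes. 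Here the liberty noted above is essential: only the \emph{set} of sizes must come out right, so multiplicities may be spent freely to absorb remnants.

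Granting the construction, the theorem follows. The structure $\mathcal{A}$ is a $\Pi^0_1$ equivalence structure with only finite classes and $S_{\mathcal A}=A$. If $\mathcal{A}$ had a $\Sigma^0_1$ copy $\mathcal{B}$, then $\mathcal{B}$ would likewise have only finite classes with $S_{\mathcal B}=S_{\mathcal A}=A$, whence by the auxiliary claim $A$ would be limitwise monotonic, contradicting the choice of $A$. Thus $\mathcal{A}$ realizes an isomorphism type in $\IsoT(\mathbb{EQ},\Pi^0_1)\setminus\IsoT(\mathbb{EQ},\Sigma^0_1)$, giving $\IsoT(\mathbb{EQ},\Pi^0_1)\not\subseteq\IsoT(\mathbb{EQ},\Sigma^0_1)$.
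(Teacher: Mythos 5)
Your overall strategy is sound and genuinely different from the paper's proof: the paper diagonalizes directly against a uniform list of all ceers, arranging that the co-ceer has a class of size $e+1$ exactly when $R_e$ does not, whereas you isolate the invariant ``set of finite class sizes,'' prove it must be limitwise monotonic for any $\Sigma^0_1$ structure with all classes finite (your auxiliary claim is correct, and is essentially the easy direction of Theorem~\ref{theo:cchm} adapted to ceers), and then try to realize a non-limitwise-monotonic d.c.e.\ set as the size-set of a $\Pi^0_1$ structure. The preparatory reductions on $A$ (shifting, passing to the increasing enumeration, invoking Theorems~\ref{theo:KNS} and~\ref{theo:liminf}) are all fine.

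The gap is in the construction, precisely at the step you flag as the main obstacle. First, ``disposing of the spoiled remnants into size-$L_x$ classes'' is impossible: a remnant is created by enumerating separations, and once its elements have been cut off from everything else in $B_x$ they form a standalone class that can only be split further, never adjoined to anything; a co-c.e.\ construction has no absorption operation. Second, the remnants need not be finitely many: if $g(x,\cdot)$ oscillates between $L_x$ and $L_x+1$ forever (which is compatible with $\liminf_s g(x,s)=L_x$), every downward move forces a split and produces a new remnant, so block $B_x$ generates infinitely many classes of unintended sizes; and even finitely many would require the additional, non-automatic fact that $A$ augmented by the remnant sizes is still not limitwise monotonic. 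The root of the difficulty is your premise that a co-c.e.\ construction ``may only ever split classes.'' That is false: a class may also \emph{grow}, by adjoining fresh elements that have never been separated from it, and this is exactly what is needed for a class to track $\liminf_s g(x,s)$ rather than $\inf_s g(x,s)$. The paper's proof of Theorem~\ref{theo:pi01-delta02} implements precisely this grow-and-shed mechanism, recycling each shed element into a single newly opened class so that no orphan classes of uncontrolled size are ever created. Since that proof does not depend on Theorem~\ref{thm:coceervsceer}, the cleanest repair of your argument is to keep your auxiliary claim and choice of $A$, and obtain the $\Pi^0_1$ structure by applying Theorem~\ref{theo:pi01-delta02} to the $\Delta^0_2$ equivalence structure having exactly one class of size $a$ for each $a\in A$.
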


\begin{proof}
To prove the theorem, it is enough to build a co-ceer that it is not isomorphic to any ceer. To do so, we  construct in stages a co-ceer $S$ which, for all $e$, satisfies the following requirements:

\begin{multline*}
  P_{e}:\  \text{there is an $S$-class of size $e+1$ if and only if}\\ 
   \text{there are no $R_e$-classes of such size.}
\end{multline*}

 This obviously guarantees that $S$  cannot be isomorphic to any of the $R_e$'s.

%\begin{align*}
%  &P_{e}: \text{$\A$ realizes $u_e$ if and only if $R_e$ does not realize it.}\\
%\end{align*}
%
% diagonalizing against the isomorphism types of all ceers $\set{R_e}_{e\in\omega}$.

%\subsection*{The strategy}
%%For each $R_e$, we will build $\A$ such that $R_e$ has an equivalence class of size $u_e$ if and only $\A$ has not an equivalence class of size $n_e$.
%%
%%Each 
%
%
% We  say that two equivalence relations $R$ and $S$ \emph{agree on some $n$} if either they both have an equivalence class of size $n$ or they both do not.

\subsection*{The construction}

In the construction  we will define the set $Y_e$ of  \emph{$e$-witnesses}; at any stage, the number of $e$-witnesses will be either $e+1$ or $e +2$. The role of such $e$-witnesses will be that of realizing an equivalence class (i.e., $[\langle e,0\rangle]_{S}$) which will eventually ensure that $S\not\cong R_e$.  During the construction we will often say that we \emph{transform some $z$ into a $S$-singleton}; by this we mean that we let $y \cancel{S} z$ and $z \cancel{S} y$, for all $z \neq y$. Finally, we say that $P_e$ \emph{turns \textbf{on}} at some stage $s$ if the oldest $R_e[s]$-class of size $e+1$ differs, for all $t< s$, from the oldest $R_e[t]$-class of the same size.

\subsubsection*{Stage $0$}

For all $e$, let $Y_e[0]: = \set{x : 0< x \leq e+1}$ and let all the requirements $P_e$ be \textbf{off}. Moreover, let $S[0]$ be the following computable partition of $\omega$ in infinitely many classes:
\[
\langle a,b\rangle S[0] \langle c,d\rangle \Leftrightarrow a=c.
\]

%\noindent Furthemore,  call $Y_e=\set{x : x < n_e}$  the set of all numbers that are \emph{$e$-preserved}.

\subsubsection*{Stage $s+1=\langle e, n \rangle$}

We focus on  $P_e$. First, let 
\[
u_{e}:=\min \set{ x : x \in Y_e[s] \mbox{ and } x> e+1}
\]
 (if such set is not empty), and let
\[ 
 v_{e}:= \min \set{x: x > \max Y_e[s]\mbox{ and } \langle e,0\rangle S[s]\langle e, x\rangle}.
 \]
 . Now, we distinguish four cases:

\begin{enumerate}
\item $card(Y_e[s])=e+1$, $R_e[s+1]$ has some class of size $e+1$, and $P_e$ is \textbf{off}:\\ If so, we want to declare a new $e$-witness. We do so by setting $Y_e[s+1]:=Y_e[s] \cup \set{v_{e}}$. We also transform $\langle e, v_{e}+1 \rangle$ into a $S$-singleton;
\item $card(Y_e[s])=e+2$, $R_e[t]$ has no classes of size $e+1$, and $P_e$ is \textbf{off}: \\ If so, we want  to decrease the number of $e$-witnesses by $1$. We do so by setting  $Y_e[s+1]:=Y_e[s] \smallsetminus \set{u_{e}}$
  and transforming $\langle e, u_{e}\rangle$ into a $S$-singleton; 
\item %$|Y_e[s]|=e+2$ and 
If 
$P_e$ is \textbf{on}:\\
If so, we want to change the current set of $e$-witnesses. We do so by setting $Y_e[s+1]:=(Y_e[s]\smallsetminus \set{u_{e}})\cup \set{v_{e}}$ and transforming $\langle e, u_{e}\rangle$ into an $S$-singleton. We also turn $P_e$ \textbf{off};
\item Any other case: \\ We transform $\langle e, v_{e}\rangle$ into a $S$-singleton.
%$R_e[s+1]$ has no class of size $n_e$ and $|Y_e[s]|\neq n_e$: If so, 
\end{enumerate}

\smallskip

Finally, let $Y_e := \lim_{s\rightarrow \infty} Y_e[s]$  and let $S:=\bigcap_{s\in\omega} S[s]$.

%Let $u_e$ be the maximum of $Y_e$, and let $v_e$ be the least number which is not  $e$-preserved and $\langle e,0\rangle E_{\A}[s] \langle e,v_e\rangle$. 
%We say that $R_e$ \emph{requires attention} if, for some $t \geq \langle e, n-1\rangle$, $R_e[t]$ and $R_e[s+1]$ disagree on $n_e$. We 
%%Let $x$ and $y$ be the least two numbers such that $\langle e,0\rangle R \langle e,x\rangle R \langle e, y\rangle$ and $y$ is not $e$-preserved. 
%distinguish three cases:
%
%\begin{enumerate}
%%Let $z$ be the least number such that $\langle e,z\rangle E_{\A} \langle e,0\rangle$ and  $\langle e,z\rangle$ is not $e$-preserved.
%\item If  $R_e[s+1]$ has no classes of size $n_e$: Put $v_e$ in $Y_e$ and transform $\langle e, v_e+1\rangle$ into a $\A$-singleton;
%\item If $R_e$ requires attention and $R_e[s+1]$ has some class of size $n_e$: Transform $\langle e, u_e\rangle $ into a $\A$-singleton;
%\item If $R_e$ does not requires attention: Transform $\langle e, v_e\rangle$ into a $\A$-singleton.
%
%
%% $R_e[e,n-1]$ and $R_e[e,n]$ disagree on $n$, and $R_e[e,n]$ has no class of size $u_e$: Make all the $e$-preserved numbers as singletons, i.e., for all $x \leq z < u_e$, let $\langle e,z\rangle \cancel{R} \langle e,w\rangle$ and $\langle e,w\rangle \cancel{R} \langle e,z\rangle$;
%%\item If $R_e[e,n-1]$ and $R_e[e,n]$ disagree on $n$, and $R_e[e,n]$ has some class of size $u_e$: Observe that in this case, $x=y$. Define all the numbers $x\leq z< u_e$ as $e$-preserved.
%\end{enumerate}

\subsection*{The verification} By construction, $S$ is obviously a co-ceer. Moreover, by induction it is not difficult to see that, for all $e$, $Y_e[0]\subseteq Y_e$ and the following identity holds,
\begin{equation}
\tag{$\star$}
card([\langle e,0\rangle]_{S})=  card(Y_e).
\label{eqn:tag}
\end{equation}  The rest of the verification is based on the following lemma.

%\begin{lemma}\label{lemma:e-classes}
%For all $e$, $|[\langle e,0\rangle]_{S}|=  | Y_e|$. 
%\end{lemma}
%
%
%\begin{proof}
%We show that  $[\langle e,0\rangle]_{S}=  \set{\langle e,x\rangle : x \in Y_e}$. Assume that $z \in [\langle e,0\rangle]_{S}$. Since $S$ is a co-ceer and in the first stage we split all numbers which differ in their first coordinates, there must be some $i$ such that  $z=\langle e,i \rangle$.
%\end{proof}

\begin{lemma}
For all $e$, $S \not\cong R_e$. 
\end{lemma}

\begin{proof}
Suppose first that $R_e$ has an equivalence class of size $e+1$. So, there exists a stage $s_0$ such that any   $R_e[s]$, with $s \geq s_0$,  realizes the oldest  equivalence class of size $e+1$ of $R_e$. Let $t$ be the least number such that $\langle e, t\rangle \geq s_0$. It is not difficult to see that, since $s_0$ is chosen minimal, at  stage $\langle e,t \rangle$ we enter in case $(1)$ of the construction, by which we set the number of $e$-witnesses to $e+2$. Next, observe that in all further stages in which we focus on $P_e$ we always enter in case $(4)$ (case $(1)$ is excluded because $card(Y_e)\neq e+1$; case $(2)$ is excluded because $R_e$ has some class of size $e+1$; case $(3)$  is excluded because $R[s_0]$ already realized the oldest $R$-class of size $e+1$ and therefore $P_e$ stays \texttt{off}). This means that  we will not modify the set of $e$-witnesses.  Therefore, we have that $Y_e= Y_e[\langle e, t\rangle]$ and, by equation (\ref{eqn:tag}), this means that $card([\langle e, 0 \rangle]_{S})\neq e+1$. 

\smallskip

On the other hand, suppose that $R_e$ has no equivalence classes of size $e+1$. This can happen in two cases that shall be considered separately. 

First, assume that there is a least stage $s_0$ such that any $R_e[s]$, with $s\geq s_0$, has no equivalence classes of size $e+1$. If so, by reasoning as above, we obtain that there is a least stage $\langle e,t\rangle$ after which we have $e+1$ many $e$-witnesses and, in all further stages focusing on $P_e$, we enter in case $(4)$. Therefore, $card([\langle e, 0\rangle])=card(Y_e)=e+1$, while $R_e$ has no equivalence classes of size $e+1$.

Secondly, assume that there are infinitely many stages at which $R_e[s]$ has an equivalence class of size $e+1$, even though there is no $R_e$-class of such size. This can happen only if the approximation to $R_e$ has infinitely many mindchanges regarding the oldest equivalence class of size $e+1$. That is, only if $P_e$ turns \textbf{on} infinitely often, and therefore the construction enters in case $(3)$ infinitely often when dealing with $P_e$. We claim that  this makes $Y_e=Y_e[0]$, and therefore $card(Y_e)=e+1$. To see this, suppose that there is a least $z$ such that $z \in Y_e \smallsetminus Y_e[0]$. This means that there is stage $s_0$ such that, for all $s\geq s_0$, $z\in Y_e[s]$. Yet, it is not hard to see that there must be a stage $\langle e,n\rangle\geq s_0$ in which $P_e$ is \textbf{on} and $u_e$ takes value $z$. Hence, in this stage the construction enters in case $(3)$  which implies that  $z\notin Y_e[s+1]$, a contradiction. So, $card(Y_e)=e+1$. By equation \ref{eqn:tag}, it follows that $card([\langle e, 0 \rangle]_{S})= e+1$, while $R_e$ has no equivalence class of such size.
\end{proof}

This concludes the proof of Theorem~\ref{thm:coceervsceer}.
\end{proof}

Having shown that there is a $\Pi^0_1$  equivalence structure which has no $\Sigma^0_1$ copy, it comes natural to investigate the relation between the $\Pi^0_1$ and the $\Delta^0_2$ equivalence structures. Obviously, every $\Pi^0_1$ (or $\Sigma^0_1$) equivalence structure is isomorphic to a $\Delta^0_2$ equivalence structure. One might conjecture that the inclusion is strict, i.e., that there is a $\Delta^0_2$ equivalence structure  which is not isomorphic to any $\Pi^0_1$ equivalence structure. This is not the case.

\begin{theorem}\label{theo:pi01-delta02}
The isomorphism types of $\Pi^0_1$ and $\Delta^0_2$ equivalence structures coincide. That is, $\IsoT(\mathbb{EQ}, \Pi^0_1) = \IsoT(\mathbb{EQ}, \Delta^0_2)$.
\end{theorem}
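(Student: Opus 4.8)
The plan is to argue via the \emph{size character} of an equivalence structure, i.e.\ the function recording, for each $\kappa\in\omega\cup\{\infty\}$, how many classes of size $\kappa$ occur; by Theorem~\ref{theo:cchm} (and general theory) the isomorphism type is determined by this character. The inclusion $\IsoT(\mathbb{EQ},\Pi^0_1)\subseteq\IsoT(\mathbb{EQ},\Delta^0_2)$ is immediate from $\Pi^0_1\subseteq\Delta^0_2$, so everything reduces to the reverse inclusion. First I would dispatch the easy case of finitely many classes: such a structure covers $\omega$ and so has at least one infinite class together with finitely many classes of fixed finite sizes, and a computable (hence $\Pi^0_1$) copy is obtained by hard\hyp coding these finitely many sizes. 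For the main case---infinitely many classes---I would observe that a $\Delta^0_2$ equivalence structure is $\mathbf{0}'$\hyp computable, so by relativizing Theorem~\ref{theo:cchm} to $\mathbf{0}'$ its character is realized by a $\mathbf{0}'$\hyp limitwise monotonic function, and conversely every such function is realized by a $\mathbf{0}'$\hyp computable (hence $\Delta^0_2$) structure. Thus $\IsoT(\mathbb{EQ},\Delta^0_2)$ is exactly the class of characters given by $\mathbf{0}'$\hyp limitwise monotonic functions, and by Theorem~\ref{theo:liminf} these are precisely the functions of the form $F(x)=\liminf_s g(x,s)$ for a computable $g$. The whole theorem therefore comes down to one construction: \emph{realize an arbitrary liminf character by a co-ceer} ($\Pi^0_1$ equivalence structure).

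For that construction I would generalize the witness technique of Theorem~\ref{thm:coceervsceer}. Starting from the computable partition into columns $\langle x,\cdot\rangle$ exactly as in that proof, I would devote column $x$ to producing a single class whose final size is $F(x)=\liminf_s g(x,s)$, building $S$ as the decreasing intersection $\bigcap_s S[s]$ of computable equivalence relations so that $S$ is automatically co-c.e., i.e.\ $\Pi^0_1$. At each stage the class for $x$ would consist of its \emph{surviving witnesses}, superseded elements being turned into $S$-singletons just as before; the number of witnesses would be driven by the current reading $g(x,s)$, with an on/off/reset flag playing the role of $P_e$ so that the transient fluctuations of $g$ are absorbed and the witness count settles at the recurrent minimum value.

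The hard part will be reconciling the one\hyp directional (split\hyp only) behaviour of a co-ceer with the behaviour of $\liminf$. Along a fixed anchor a co-ceer class can only shrink, so a naive ``split off whenever $g$ dips'' strategy settles on $\inf_s g(x,s)$ rather than on $\liminf_s g(x,s)$: an \emph{early} low reading would be fatal, since merging is impossible and the element cannot be recovered. The settling mechanism must therefore \emph{forgive} finitely many early anomalies by abandoning a too\hyp small class and re\hyp establishing a larger one from fresh column elements, which it can do only finitely often because $m_t=\inf_{s\ge t}g(x,s)$ is non\hyp decreasing and (when $F(x)<\infty$) eventually constant. The genuinely delicate point---again forced by the impossibility of merging---is the disposal of the discarded elements: left as singletons they would inflate the count of size\hyp $1$ classes to $\infty$, and an infinite column cannot collapse to a single finite class. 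I would resolve this by a global bookkeeping argument, funneling the excess elements into classes that the target character already prescribes (so that no spurious classes of any size are created), and the bulk of the verification will be checking, uniformly across all columns, that the resulting character is \emph{exactly} $F$.
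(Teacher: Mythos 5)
Your reduction is exactly the paper's: dispose of the finitely-many-classes case, relativize Theorem~\ref{theo:cchm} to $\mathbf{0}'$ so that the character of a $\Delta^0_2$ structure is given by a $\mathbf{0}'$-limitwise monotonic function, apply Theorem~\ref{theo:liminf} to write that function as $\liminf_s g(x,s)$ for a computable $g$, and reduce the whole theorem to realizing such a character by a co-ceer. You also correctly isolate the crux: a $\Pi^0_1$ construction can only split classes, never merge them, so discarded elements cannot be recovered, and leaving them as singletons ruins the character.

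The gap is that the scaffolding you carry over from Theorem~\ref{thm:coceervsceer} makes your own proposed fix impossible. If $S[0]$ is the partition into columns and superseded witnesses are ``turned into $S$-singletons just as before,'' then by the time you want to funnel a discarded element into some other prescribed class, that element has already been declared inequivalent to everything~--- and in a co-c.e.\ construction such declarations are irrevocable; funneling across columns is likewise blocked because distinct columns are separated already at stage $0$. The paper's construction succeeds precisely by abandoning both features: there is no initial column partition and nothing is ever made a singleton. Negative facts are declared only between elements currently carrying \emph{different} labels; a discarded element merely loses its label (so it keeps its positive relation to every element not yet used) and is later recycled as the \emph{first} member of a brand-new class, which is then populated with fresh elements that have never been separated from it. Two further devices make this converge: one arranges $g(k,s)\geq 1$ and always removes the greatest-indexed members of a class first, so the first member is never removed; and since elements are consumed in increasing order, a recycled element is the least member of its new class, hence is displaced at most once. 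With that, class $k$ stabilizes at size $\liminf_s g(k,s)$ and no spurious classes appear. Your plan names the right target, but the ``global bookkeeping argument'' you defer is the entire content of the theorem, and the specific mechanism you propose to generalize actively obstructs it.
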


\begin{proof}
	Let $E$ be a $\Delta^0_2$ equivalence relation. If $E$ has only finitely many equivalence classes, then it is obvious that $E$ is isomorphic to a computable structure. Hence, without loss of generality, we assume that $E$ has infinitely many classes.
	
	A relativized version of Theorem~\ref{theo:cchm} implies that there is a $\mathbf{0}'$-limit\-wise monotonic function $H\colon \omega\to\omega \cup \{\infty\}$ with the following property: For any $\alpha\in\omega\cup \{ \infty\}$, the number of $E$-equivalence classes of size $\alpha$ is equal to 
	\[
		\gamma(\alpha) := card(\{ k\in\omega\,\colon H(k) = \alpha\}).
	\]
	Note that $H(k)$ cannot be equal to zero.
	
	By Theorem~\ref{theo:liminf}, one can choose a computable function $g\colon \omega^2\to\omega$ such that for any $k\in\omega$, we have
	\[
		H(k) = \lim\inf\!_s\, g(k,s).
	\]
	Since $H(k) \neq 0$, we may assume that $g(k,s) \geq 1$ for all $s$ (i.e. if $g(k,s)$ equals zero, then one just replaces this zero with one).
	
	We give a construction of a $\Pi^0_1$ equivalence relation $R$. As per usual, we define $R$ by enumerating its complement, i.e. at any stage of the construction, we are allowed to declare that $(x\rel{\cancel{R}}y)$, where $x\neq y$. Surely, if $(x\rel{\cancel{R}}y)$, then we always (implicitly) assume $(y\rel{\cancel{R}}x)$. In other words, we can only provide negative information about $R$, and we never give positive pieces of $R$-data.
	
	At a non-zero stage $s$, we define a finite set $A[s]$ and a function $\ell[s]$ acting from $A[s]$ onto $\{ 0,1,\dots,s-1\}$. The intuition behind $\ell[s]$ is as follows~--- at the stage $s$, we think that the first $s$ classes of $R[s]$ are precisely the following sets:
	\[
		\{ x\in A[s]\,\colon \ell(x)[s] = k \},\ k=0,1,\dots,s-1.
	\]
	Thus, we will assume that we ``automatically'' declare the following $R$-in\-for\-ma\-tion: if $\ell(x)[s]\downarrow\ \neq \ell(y)[s]\downarrow$, then $(x\rel{\cancel{R}}y)$. 
	
	At a stage $s$, an element $x$ is called \emph{fresh} if $x\not\in \bigcup_{t< s} A[s]$.
	
	%\ 
	
	\subsection*{The construction}
	
	\emph{Stage $0$.} Set $A[0] := \emptyset$.
	
	\emph{Stage $s+1$.} First, choose an element $w$ as follows:
	\begin{itemize}
		\item If the set $\{ x \text{ is non-fresh}\,\colon x\not\in A[s]\}$ is non-empty, then $w$ is the least element of this set.
		
		\item Otherwise, $w$ is the least fresh element.
	\end{itemize}
	Declare $\ell(w)[s+1] := s$.
	
	After that, for each $k<s$, we act according to the following instructions. We define
	\[
		\delta(k) := card(\{ x\in A[s]\,\colon \ell(x)[s] = k\}).
	\]
	
	\begin{enumerate}
		\item If $g(k,s+1) > \delta(k)$, then choose the least $(g(k,s+1)-\delta(k))$ fresh elements. For each of these elements $y$, set $\ell(y)[s+1] := k$.
		
		\item If $g(k,s+1) < \delta(k)$, then find the greatest $(\delta(k) - g(k,s+1))$ elements $z$ such that $\ell(z)[s] = k$. For each of these $z$, set $\ell(z)[s+1]$ undefined. Surely, we also assume $z\not\in A[s+1]$.
	\end{enumerate}
	
	The description of the construction is finished.
	
	\subsection*{The verification} 
	
	It is clear that the constructed relation $R$ is co-c.e. Moreover, it is not hard to verify the following claim (recall that $g(k,s) \geq 1$ for all $k$ and $s$).
	
	\begin{claim}
		Every $x\in\omega$ satisfies precisely one of the following two cases:
		\begin{itemize}
			\item[(a)] There is a stage $s_0$ such that $\ell(x)[s_0]$ is defined, and 
			\[
				\ell(x)[s] = \begin{cases}
					\text{undefined}, & \text{if } s<s_0,\\
					l(x)[s_0], & \text{if } s\geq s_0.
				\end{cases}
			\]
			
			\item[(b)] There are stages $s_0 < s_1 < s_2$ such that $\ell(x)[s_0]\downarrow\  < \ell(x)[s_2]\downarrow$, and
			\[
				\ell(x)[s] = \begin{cases}
					\text{undefined}, & \text{if } s<s_0,\\
					l(x)[s_0], & \text{if } s_0 \leq s < s_1,\\
					\text{undefined}, & \text{if } s_1 \leq s <s_2,\\
					l(x)[s_2], & \text{if } s \geq s_2.
				\end{cases}
			\]
		\end{itemize}
	\end{claim}
	
	For an element $x\in\omega$, let $\ell^{\ast}(x) := \lim_s \ell(x)[s]$. It is not difficult to show that the following conditions are equivalent: \begin{equation}\label{equ:lstar}
		(x \rel{R} y)\ \Leftrightarrow\ \ell^{\ast}(x) = \ell^{\ast}(y). 
\end{equation}
Therefore, we deduce that $R$ is an equivalence relation.
	
	At the end of a stage $s+1$, the relation $R[s+1]$ satisfies the following: for any $k < s$,
	\[
		card(\{ x\in A[s+1]\,\colon \ell(x)[s+1] = k\}) = g(k,s+1).
	\]
	This fact and the description of the construction together imply that for any $k\in \omega$ and any $m\geq 1$, the following are equivalent:
	\begin{enumerate}
		\item $card(\{ x\in\omega\,\colon \ell^{\ast}(x) = k\}) \geq m$;
		
		\item $\exists s_0 (\forall s\geq s_0) (g(k,s) \geq m)$.
	\end{enumerate}
	Thus, we obtain that
	\[
		card(\{ x\in\omega\,\colon \ell^{\ast}(x) = k\}) = \lim\inf\!_s\ g(k,s) = H(k).
	\]
	
	By~(\ref{equ:lstar}), for each $\alpha\in\omega\cup\{ \infty\}$, the relations $R$ and $E$ have the same number of equivalence classes of size $\alpha$. Therefore, $R$ is isomorphic to $E$. Theorem~\ref{theo:pi01-delta02} is proved.
\end{proof}

It is not hard to see that the last two theorems relativize, and the same is true for the proof of Cenzer, Harizanov, and Remmel~\cite{cenzer2011sigma10} that there is a $\Sigma
^0_1$ equivalence structure with no computable copy.  Hence, we have the following characterization of the isomorphism types of equivalence structures realized at various levels of the arithmetical hierarchy. 

\begin{theorem}\label{theo:arithmetical}
For all $n>0$, we have that 
\[
\IsoT(\mathbb{EQ},  \Sigma^{0}_{n})\subsetneq \IsoT(\mathbb{EQ},  \Pi^{0}_{n}) = \IsoT(\mathbb{EQ},  \Delta^{0}_{n+1}).
\]
\end{theorem}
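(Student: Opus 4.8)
The plan is to prove the chain of (non-)inclusions
\[
\IsoT(\mathbb{EQ},\Sigma^0_n)\subsetneq \IsoT(\mathbb{EQ},\Pi^0_n)=\IsoT(\mathbb{EQ},\Delta^0_{n+1})
\]
for each $n>0$ by relativizing the three results already established in this section. The claim is that Theorems~\ref{thm:coceervsceer} and~\ref{theo:pi01-delta02}, together with the cited result of Cenzer, Harizanov, and Remmel, each carry an oracle uniformly, and that plugging in a $\emptyset^{(n-1)}$ oracle shifts every complexity level up by $n-1$. First I would record the standard fact that a relation $R$ is $\Sigma^0_n$ (resp.\ $\Pi^0_n$, $\Delta^0_n$) if and only if $R$ is $\Sigma^{0}_{1}$ (resp.\ $\Pi^0_1$, $\Delta^0_1$) relative to the oracle $\emptyset^{(n-1)}$; this is the Post-style bridge that lets a single relativized construction produce each arithmetical level.

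For the strict inclusion $\IsoT(\mathbb{EQ},\Sigma^0_n)\subsetneq\IsoT(\mathbb{EQ},\Pi^0_n)$, I would argue in two halves. The inclusion $\subseteq$ is immediate once one observes that a $\Sigma^0_n$ equivalence relation with infinitely many classes is, in particular, $\Delta^0_{n+1}$, hence $\Pi^0_n$ by the equality being proved; the finite-class case collapses to a computable (and so $\Pi^0_n$) copy as usual. For properness, I would relativize the argument of Theorem~\ref{thm:coceervsceer}: run that construction with a $\emptyset^{(n-1)}$ oracle so that $S$ becomes a $\Pi^0_n$ equivalence relation, and let $\{R_e\}_{e\in\omega}$ enumerate the $\Sigma^0_n$ (i.e.\ $\emptyset^{(n-1)}$-c.e.) equivalence relations. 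The requirements $P_e$ and the verification go through verbatim relative to the oracle, so $S$ is a $\Pi^0_n$ equivalence structure isomorphic to no $\Sigma^0_n$ one, which also shows the inclusion cannot be reversed.

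For the equality $\IsoT(\mathbb{EQ},\Pi^0_n)=\IsoT(\mathbb{EQ},\Delta^0_{n+1})$, the direction $\subseteq$ is trivial since every $\Pi^0_n$ relation is $\Delta^0_{n+1}$. For $\supseteq$ I would relativize Theorem~\ref{theo:pi01-delta02}. The key point is that the proof of Theorem~\ref{theo:pi01-delta02} rests on two relativizable tools: the Calvert--Cenzer--Harizanov--Morozov characterization (Theorem~\ref{theo:cchm}), already stated for an arbitrary degree $\mathbf{d}$, and the Harris/Kach liminf characterization (Theorem~\ref{theo:liminf}), which relativizes to say that $F$ is $\mathbf{d}'$-limitwise monotonic iff $F(x)=\liminf_s g(x,s)$ for some $\mathbf{d}$-computable $g$. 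Taking $\mathbf{d}=\emptyset^{(n-1)}$, a $\Delta^0_{n+1}$ equivalence structure corresponds to a $\mathbf{d}'=\emptyset^{(n)}$-limitwise monotonic size function $H$, hence to a $\liminf$ of a $\emptyset^{(n-1)}$-computable $g$; feeding this $g$ into the (relativized) co-c.e.\ construction yields a $\Pi^0_n$ relation $R$ with the same class-size spectrum, so $R\cong E$.

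The main obstacle I anticipate is not any single deep step but the bookkeeping needed to certify uniformity of the relativizations. In particular one must check that the liminf characterization of Theorem~\ref{theo:liminf} is genuinely a $\mathbf{d}$-to-$\mathbf{d}'$ statement for every degree $\mathbf{d}$ (the cited proofs of Harris and Kach are for $\mathbf{0}$ vs.\ $\mathbf{0}'$), and that the enumeration $\{R_e\}$ of $\Sigma^0_n$ equivalence relations can be taken uniformly in the oracle so that the diagonalization in the relativized Theorem~\ref{thm:coceervsceer} still meets every $P_e$. Once these uniformity checks are in place, each line of the two earlier proofs transfers mechanically with $\emptyset^{(n-1)}$ in the background, and the displayed chain follows.
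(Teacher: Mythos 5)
Your proposal is correct and follows essentially the same route as the paper, which simply observes that Theorems~\ref{thm:coceervsceer} and~\ref{theo:pi01-delta02} (and the Cenzer--Harizanov--Remmel construction) relativize, so that running them over the oracle $\emptyset^{(n-1)}$ and invoking Post's theorem yields the displayed chain at level $n$. Your additional care about the relativized form of Theorem~\ref{theo:liminf} and the uniformity of the enumeration $\{R_e\}$ is exactly the bookkeeping the paper leaves implicit.
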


\smallskip

We conclude our study of equivalence structures by lifting our focus to the analytical hierarchy. We show that for all $n>0$,
\begin{gather*}
	\IsoT(\mathbb{EQ},  \Delta^{1}_{n})\subsetneq \IsoT(\mathbb{EQ},  \Sigma^{1}_{n});\\ 
	\IsoT(\mathbb{EQ},  \Delta^{1}_{n}) \subsetneq \IsoT(\mathbb{EQ},  \Pi^{1}_{n});\\ 
	\IsoT(\mathbb{EQ},  \Sigma^{1}_{n}) \cup \IsoT(\mathbb{EQ},  \Pi^{1}_{n}) \subsetneq \IsoT(\mathbb{EQ},  \Delta^{1}_{n+1}).
\end{gather*}

\begin{proposition}\label{prop:analytical}
	Let $n$ be a non-zero natural number. 
	\begin{itemize}
		\item[(i)] There is a $\Sigma^1_n$ equivalence structure which is not isomorphic to any $\Delta^1_n$ equivalence structure. A similar result holds for ``$\Pi^1_n$~vs.~$\Delta^1_n$''.
		
		\item[(ii)] There is a $\Delta^1_{n+1}$ equivalence structure such that its isomorphism type cannot be realized neither by a $\Sigma^1_n$ structure, nor by a $\Pi^1_n$ structure.
	\end{itemize}
\end{proposition}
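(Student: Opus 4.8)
The plan is to reduce everything to the \emph{size character} of an equivalence structure. Since two equivalence structures are isomorphic iff, for every $\kappa\in\omega\cup\{\infty\}$, they have the same number of classes of size $\kappa$, realizing a prescribed isomorphism type amounts to realizing a prescribed size character by a relation of the desired complexity. For $S\subseteq\omega$ I would encode $S$ into distinct finite sizes: let $E_S$ be the structure having exactly one class of size $n+2$ for each $n\in S$, together with infinitely many singletons. Concretely, on the domain $\{\langle n,i\rangle\}$ put $\langle n,i\rangle\mathrel{R}\langle m,j\rangle$ iff $\langle n,i\rangle=\langle m,j\rangle$, or $n=m$, $i,j<n+2$ and $n\in S$. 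This $R$ is an equivalence relation; it has a class of size $n+2$ exactly when $n\in S$ (the sizes $n+2$ are pairwise distinct, and differ from the singleton size $1$, so no collisions occur) and it has infinitely many singletons. Moreover $R(a,b)$ has the form (decidable) $\vee$ (decidable $\wedge$ ``$n\in S$''), so $R$ lies in $\Gamma$ whenever ``$n\in S$'' is a $\Gamma$-predicate, for every $\Gamma\in\{\Sigma^1_n,\Pi^1_n,\Delta^1_n,\Delta^1_{n+1}\}$. Hence $E_S$ is $\Gamma$-realizable whenever $S\in\Gamma$, and no appeal to an analogue of Theorem~\ref{theo:cchm} is needed.

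The heart of the argument is a decoding principle. The isomorphism-invariant predicate $\mathrm{Size}_k\equiv$ ``the structure has a class of size exactly $k$'' is expressible using \emph{only number quantifiers} over the atoms $R$ and $\neg R$, namely $\exists x_1\cdots x_k\,[\,\bigwedge_{i\neq j}x_i\neq x_j\wedge\bigwedge_{i<j}R(x_i,x_j)\wedge\forall y\,(R(y,x_1)\rightarrow\bigvee_i y=x_i)\,]$. Because all of $\Sigma^1_n,\Pi^1_n,\Delta^1_n,\Delta^1_{n+1}$ are closed under number quantification (for $n\geq 1$), the complexity of $\mathrm{Size}_k$ is governed solely by which atoms occur. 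If a copy $\mathcal B$ of $E_S$ has $R\in\Delta^1_n$, then both $R$ and $\neg R$ are $\Delta^1_n$, so $\{k:\mathrm{Size}_k\}\in\Delta^1_n$ and hence $S\in\Delta^1_n$. If instead $R\in\Sigma^1_n$, then the ``pairwise related'' clause contributes $\Sigma^1_n$ while the upper-bound clause $\forall y(\neg R\vee\cdots)$ contributes $\Pi^1_n$, so $\mathrm{Size}_k$ lands in the Boolean combination $\Sigma^1_n\wedge\Pi^1_n$, closed under the outer number quantifier; writing $\mathcal D$ for the resulting class, one has $\Sigma^1_n\cup\Pi^1_n\subseteq\Sigma^1_n\wedge\Pi^1_n\subseteq\mathcal D\subsetneq\Delta^1_{n+1}$, and $S\in\mathcal D$. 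The case $R\in\Pi^1_n$ is symmetric and gives the same $\mathcal D$.

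With these two facts the separations fall out by choosing sets of the right complexity, which exist by the hierarchy theorems. For part (i), take $S$ properly $\Sigma^1_n$, i.e.\ $S\in\Sigma^1_n\setminus\Delta^1_n$: then $E_S$ has a $\Sigma^1_n$ copy by the construction, but no $\Delta^1_n$ copy, since a $\Delta^1_n$ copy would force $S\in\Delta^1_n$. Taking $S$ properly $\Pi^1_n$ gives the ``$\Pi^1_n$ vs.\ $\Delta^1_n$'' half verbatim. For part (ii), choose $S\in\Delta^1_{n+1}\setminus\mathcal D$ (possible as $\mathcal D\subsetneq\Delta^1_{n+1}$): then $E_S$ has a $\Delta^1_{n+1}$ copy, but neither a $\Sigma^1_n$ nor a $\Pi^1_n$ copy, because either would force $S\in\mathcal D$; and since $\Sigma^1_n\cup\Pi^1_n\subseteq\mathcal D$, this $S$ is in particular neither $\Sigma^1_n$ nor $\Pi^1_n$, as the statement requires.

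The main obstacle is the precise complexity bookkeeping in the decoding principle, specifically pinning down the class $\mathcal D$ into which $\mathrm{Size}_k$ falls over a $\Sigma^1_n$ (or $\Pi^1_n$) copy and proving $\mathcal D\subsetneq\Delta^1_{n+1}$. The clean observation that number quantifiers are free and that only the single occurrence of $\neg R$ costs one alternation immediately delivers the sharp $\Delta^1_n$ bound underpinning part (i); but for part (ii) I must verify that $\mathrm{Size}_k$ really remains within a proper sublevel of $\Delta^1_{n+1}$ — a level of the difference hierarchy over $\Sigma^1_n$ that is stable under the outer number quantifier — rather than exhausting all of $\Delta^1_{n+1}$. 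This, together with confirming that the three gaps (properly $\Sigma^1_n$, properly $\Pi^1_n$, and $\Delta^1_{n+1}\setminus\mathcal D$) are nonempty, is where the care lies; the constructions and the isomorphism-invariance of $E_S$ are routine.
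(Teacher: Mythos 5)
Your coding/decoding strategy is the same one the paper uses (the paper encodes $X$ into block sizes and decodes via the character $\chi$, which is $\Sigma^0_2$ in the atomic diagram), and your part (i) is complete and correct: a $\Delta^1_n$ copy makes $\mathrm{Size}_k$ a number-quantifier formula over $\Delta^1_n$ atoms, hence $\Delta^1_n$, contradicting $S\notin\Delta^1_n$.

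Part (ii), however, has a genuine gap, and you have flagged it yourself without closing it. You need a class $\mathcal D$ with $\Sigma^1_n\cup\Pi^1_n\subseteq\mathcal D\subsetneq\Delta^1_{n+1}$ such that (a) $\mathrm{Size}_k$ over any $\Sigma^1_n$ or $\Pi^1_n$ copy lands in $\mathcal D$ and (b) you can exhibit a concrete $S\in\Delta^1_{n+1}\setminus\mathcal D$; neither is established. The specific mechanism you hint at~--- ``a level of the difference hierarchy over $\Sigma^1_n$ that is stable under the outer number quantifier''~--- does not exist: no finite Boolean-combination level of $\Sigma^1_n$ sets is closed under existential number quantification (already at the base of the analogy, $\exists x(\Sigma^0_1\wedge\Pi^0_1)$ yields all of $\Sigma^0_2$, which escapes every finite level of the Ershov hierarchy). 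If you instead take $\mathcal D$ to be the full closure of $\Sigma^1_n\cup\Pi^1_n$ under Boolean operations and number quantifiers, (a) holds but you still owe a proof of $\mathcal D\subsetneq\Delta^1_{n+1}$ together with an explicit witness $S$. The clean repair is the paper's: fix an $m$-complete $\Sigma^1_n$ set $A$; any $\Sigma^1_n$ (or $\Pi^1_n$) copy $R$ has atomic diagram $D(R)\leq_T A$, your decoded set is $\Sigma^0_2$ in $D(R)$, hence $\leq_T A^{(2)}$; now take $S:=A^{(3)}$, which is $\Delta^1_{n+1}$ (as $\Delta^1_{n+1}$ is closed under jump) but not $\leq_T A^{(2)}$. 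With that substitution for your $\mathcal D$-argument, your proof goes through.
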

\begin{proof}
	\emph{Ad (i).}\ Consider a computable partition of $\omega$ into infinitely many finite blocks: for an index $i\in\omega$, the $i$th block contains precisely $2i+4$ elements~--- $a^i_0,a^i_1,\dots,a^i_{2i+3}$.
	
	Let $X\subseteq \omega$ be a set belonging to $\Sigma^1_n\setminus \Delta^1_n$. We define an equivalence structure $S$ as follows:
	\begin{itemize}
		\item[(a)] For every $i\in\omega$, the elements $a^i_0,a^i_1,\dots,a^i_{2i+2}$ are $S$-equivalent. If $i\neq j$, then $a^i_k$ and $a^j_{\ell}$ are not $S$-equivalent.
		
		\item[(b)] $a_{2i+3}^i \in [a_0^i]_S$ if and only if $i\in X$.
	\end{itemize}
	A standard application of the Tarski--Kuratowski algorithm shows that the relation $S$ is $\Sigma^1_n$. 
	
	Note that the character $\chi(S)$ satisfies the following:
	\[
		\chi(S) = \{ (1, k)\,\colon k\geq 1 \} \cup \{ (2i+4,1)\,\colon i\in X \} \cup \{ (2j+3,1)\,\colon j\not\in X\}.
	\]
	Thus, $X\leq_T \chi(S)$ and the set $\chi(S)$ cannot be $\Delta^1_n$. 
	
	On the other hand, for an arbitrary countable equivalence structure $R$, the character $\chi(R)$ is $\Sigma^0_2$ in the atomic diagram of $R$. Hence, if $R$ is a $\Delta^1_n$ structure, then $\chi(R)\in \Delta^1_n$. Therefore, we deduce that our structure $S$	is not isomorphic to any $\Delta^1_n$ equivalence structure. 
	
	The proof of the case ``$\Pi^1_n$~vs.~$\Delta^1_n$'' is essentially the same: just choose $X$ belonging to $\Pi^1_n\setminus \Delta^1_n$.
	
	\emph{Ad (ii).} Let $A$ be an $m$-complete $\Sigma^1_n$ set. We define $X:= A^{(3)}$, and we build an equivalence structure $S$ by applying the previous construction to the set $X$. Since $A\in \Delta^1_{n+1}$, we deduce that both $X$ and $S$ are also $\Delta^1_{n+1}$.
	
	Towards a contradiction, assume that $R$ is a $\Sigma^1_n$ equivalence structure, which is isomorphic to $S$. By $D(R)$ we denote the atomic diagram of $R$. It is clear that $D(R)$ is computable in the $\Sigma^1_n$-complete set $A$. Therefore,
	\[
		A^{(3)} \leq_T \chi(S) = \chi(R) \leq_T D(R)^{(2)} \leq_T A^{(2)},
	\]
	which gives a contradiction. Hence, the isomorphism type of $S$ cannot be realized by a $\Sigma^1_n$ structure. Moreover, a similar argument applies for $\Pi^1_n$ structures.
\end{proof}

%%%%%%%%%%%%%%%%%%%%%

\section{Isomorphism types of preorders}

This section is devoted to the case of preorders. First, observe that there is a $\Pi^0_1$ preorder that is not isomorphic to any $\Sigma^0_1$ preorder: this is an immediate consequence of Theorem \ref{thm:coceervsceer}. The next theorem says that the converse also holds, thus distinguishing the case of preorders from that of equivalence structures.

\begin{theorem}\label{thm:sigma1vspi1preord}
	There is a $\Sigma^0_1$ preorder $R$, which is not isomorphic to any $\Pi^0_1$ preorder.
\end{theorem}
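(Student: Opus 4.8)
The plan is to build the required $\Sigma^0_1$ preorder $R$ directly, by a stage-by-stage enumeration of its positive pairs (so that $R$ is c.e.), meeting for every $e$ the diagonalization requirement $N_e\colon R\not\cong Q_e$, where $\{Q_e\}_{e\in\omega}$ is a fixed enumeration of all $\Pi^0_1$ (co-c.e.) binary relations; if some $Q_e$ fails to be a preorder then $N_e$ is vacuous. The basic unit is a two-element \emph{switch}: a pair of points $b,c$ sitting on top of a private ``tag'', namely a pendant $\rel{R}$-chain of length $e$ attached below them, which occurs in one of two non-isomorphic shapes, \emph{merged} (where $b$ and $c$ are mutually $R$-related, so $\set{b,c}$ is a $supp$-class of size $2$) or \emph{split} (where $b\rel{R}c$ strictly, so $\set{b,c}$ is a $2$-chain). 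The pendant tag guarantees that the $e$-gadget is the unique component of its isomorphism type, so any isomorphism $R\cong Q_e$ must match $R$'s $e$-gadget with $Q_e$'s.

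The engine of the argument is an asymmetry between $\Sigma^0_1$ and $\Pi^0_1$ that I will call \emph{opposite instability}. In a $\Sigma^0_1$ preorder one may move a switch from split to merged (simply enumerate the missing pair $c\rel{R}b$), but never from merged to split, since positive information is never retracted; dually, in a $\Pi^0_1$ preorder one may move a switch from merged to split (enumerate $c\rel{\cancel{R}}b$ into the complement), but never from split to merged. Consequently, as the construction proceeds the number of merged $e$-gadgets in $R$ can only increase, whereas the number of merged $e$-gadgets in any $\Pi^0_1$ copy $Q_e$ can only decrease. This is exactly the point at which preorders part company with equivalence structures: for equivalence structures every size-type invariant is nondecreasing on the $\Sigma^0_1$ side and the $\Pi^0_1$ side wins (Theorem~\ref{theo:pi01-delta02}), but here the strict order lets the $\Sigma^0_1$ side exploit a quantity that the $\Pi^0_1$ side can only decrease.

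I would run the following strategy for $N_e$, using a single $e$-gadget. Keep it \emph{split} by default, so $R$'s merged-count at $e$ is $r_e\in\set{0,1}$, initially $0$. Watch $Q_e$'s matching $e$-gadget and set $\widetilde L_e(s):=1$ while its top pair still \emph{looks} merged (the pair $c\rel{\cancel{R}}b$ has not yet been enumerated into the complement of $Q_e$) and $\widetilde L_e(s):=0$ once it has; by opposite instability $\widetilde L_e(s)$ is non-increasing, and its limit $L_e\in\set{0,1}$ is the true number of merged $e$-gadgets of $Q_e$. Apply the rule: whenever $r_e=\widetilde L_e(s)$, flip $R$'s $e$-gadget from split to merged, setting $r_e:=1$. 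A short case analysis shows this fires at most once and always ends with $r_e\neq L_e$: if $Q_e$ keeps its gadget merged ($L_e=1$) then $\widetilde L_e\equiv 1\neq 0=r_e$, so no flip occurs and $R$ has a split $e$-gadget against a merged one; if $Q_e$ splits its gadget ($L_e=0$), then once $\widetilde L_e$ drops to $0=r_e$ I merge mine, so $R$ has a merged $e$-gadget against a split one. Either way $R$ and $Q_e$ carry a different number of merged $e$-gadgets, whence $R\not\cong Q_e$. The requirements own pairwise disjoint blocks of $\omega$ with pairwise distinct tags, so distinct strategies never interfere and $R$ is a single $\Sigma^0_1$ preorder meeting every $N_e$.

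The step I expect to be the main obstacle is the faithful \emph{identification}, inside the arbitrary $\Pi^0_1$ presentation $Q_e$, of the component that an isomorphism would match with $R$'s $e$-gadget, together with the correct on-line detection of the event ``its top pair splits''. Since membership in $Q_e$ is only $\Pi^0_1$, at early stages many tuples of $Q_e$ may masquerade as the tag-$e$ shape, and a premature flip against a false candidate could spoil $N_e$. The intended fix is to design the tag so that it is pinned down by positive ($\Pi^0_1$) data alone, and to track the candidate gadgets with a movable marker, flipping only in response to the marker currently believed to carry the genuine $e$-gadget and re-targeting that marker (a local finite-injury action, confined to the block of $N_e$) whenever a candidate is exposed as spurious. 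Verifying that the marker settles, that the flip is ultimately performed against the true $e$-gadget, and that the $supp$-class count at $e$ is therefore permanently wrong in $Q_e$, is the technical heart of the proof; the remainder is routine bookkeeping ensuring that $R$ stays a reflexive, transitive, $\Sigma^0_1$ relation throughout.
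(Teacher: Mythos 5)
Your guiding intuition --- that a $\Sigma^0_1$ preorder can only turn a strict pair into a merged one while a $\Pi^0_1$ preorder can only do the opposite --- is exactly the asymmetry the theorem rests on, but the implementation via direct diagonalization has a genuine gap at the point you yourself flag: the identification of the $e$-gadget inside an arbitrary $\Pi^0_1$ preorder $Q_e$. In a $\Pi^0_1$ structure the approximations satisfy $Q_e\subseteq Q_e[s]$, so \emph{positive} facts (the ones certifying that a candidate tuple carries your tag, and that its top pair is genuinely comparable at all) are never confirmed at any finite stage, and the statement ``these elements form an entire component'' is $\Pi^0_2$ (for every outside $z$, two eventually-confirmed negative facts must hold). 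Consequently the event your strategy must react to --- ``the true image of $R$'s $e$-gadget has split'' --- is not $\Sigma^0_1$, and the movable-marker repair does not restore a finite-injury argument, because a spurious candidate need not ever be \emph{exposed} by a c.e.\ event. For instance, a candidate that is a proper part of a larger $Q_e$-component looks correct forever: the extra comparability that disqualifies it is a true positive fact, hence never refuted, but also never confirmed, so you cannot act on it; and at early stages every candidate sits inside one huge $Q_e[s]$-component, so the marker has no sound basis on which to settle. Nothing in the proposal rules out the marker stabilizing on a candidate whose top pair never splits while the genuine gadget's does (or vice versa), in which case $N_e$ fails.

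The paper's proof circumvents identification entirely by replacing the local flip with a global invariant observable from \emph{negative} data only. It builds $R=S_A$ so that the set of values ``number of $b_j$'s incomparable with $a_i$'' equals a d.c.e.\ set $A$ that is not limitwise monotonic (Theorem~\ref{theo:KNS}); the $\Sigma^0_1$ side can realize a d.c.e.\ set because the value $v(i)$ may later be decreased to $0$ by \emph{adding} comparabilities. In any $\Pi^0_1$ copy $Q$, incomparability is a c.e.\ property, sorting elements into $a$-type and $b$-type is decidable (each element eventually reveals incomparability with exactly one of $c,d$), and the incomparability counts are approximable from below, hence limitwise monotonic --- contradicting the choice of $A$. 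If you wish to keep a diagonalization flavour, you would need to redesign the gadgets so that both the tag and the switch state are witnessed by incomparabilities rather than comparabilities; as written, the argument does not go through.
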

\begin{proof}
	First, we describe how one can construct a sufficiently large class of $\Sigma^0_1$ preorders. After that, inside this class, we will choose a preorder satisfying the theorem.
	
	Let $B\subseteq \omega$ be an arbitrary $\Delta^0_2$ set with $0\not\in B$. Choose a total computable function $g(x,s)$ such that $B(x) = \lim_s g(x,s)$.
	
	We define a $\Sigma^0_1$ preorder $S_B$ as follows. Fix a computable partition of $\omega$ into
	\[
		\{ c, d\} \cup \{ a_i\,\colon i\in\omega\} \cup \{ b_j\,\colon j\in\omega\}.
	\]
	By $x<_{S_B} y$ we denote the formula $(x \rel{S_B} y) \& (y\ \cancel{\rel{S}_B}\ x)$.
	
	Beforehand, we set:
	\begin{itemize}
		\item The relation $S_B$ is reflexive.
	
		\item All $a_i$, $i\in\omega$, are pairwise $S_B$-incomparable. For every $i$, $c<_{S_B} a_i$, and $a_i$ is $S_B$-incomparable with $d$. 
		
		\item The element $d$ is $S_B$-incomparable with $c$. For every $j\in\omega$, we have $b_{j+1}<_{S_B} b_j <_{S_B} d$, and $b_j$ is incomparable with $c$.		
	\end{itemize}
	
	In order to finish the definition of $S_B$, our construction will define the following finite values: for every $i\in \omega$, one needs to recover
	\[
		v(i) := \min \{ j\in\omega\,\colon b_j <_{S_B} a_i\}.
	\]
	
	The $\Sigma^0_1$-ness of the preorder $S_B$ will be ensured by the step-by-step definition of $v(i)$, which proceeds as follows:
	\begin{itemize}
		\item First, set $v(i)[0]$ undefined.
		
		\item Then at some stage $s_0$, we put $v(i)[s_0]:=k_0$, i.e. we enumerate the formulas saying
		\[
			(b_{k_0} \rel{S_B} a_i),\ (b_{k_0+1} \rel{S_B} a_i),\ (b_{k_0+2} \rel{S_B} a_i),\ \dots
		\]
		into our (approximation of) $S_B$.
		
		\item After that, the value $v(i)[s]$ can change at most one time: at a stage $s_1 > s_0$, we put $v(i)[s_1]:=0$~--- this means that we enumerate the facts $(b_{0} \rel{S_B} a_i)$, $(b_{1} \rel{S_B} a_i)$, \dots, $(b_{k_0-1} \rel{S_B} a_i)$ into $S_B$. 
	\end{itemize}
	
	At a stage $s+1$, we say that an element $a_i$ is \emph{fresh} if the value $v(i)[s]$ is undefined.
	
	\subsection*{The construction}
	
	\emph{Stage 0.} Set $v(i)[0]$ undefined for all $i\in\omega$.
	
	\emph{Stage $s+1 = 2t+1$.} Find the least fresh $a_i$, and define $v(i)[s+1] := 0$.
	
	\emph{Stage $s+1 = 2t+2$.} For each non-zero $x\leq s$, proceed as follows:
	\begin{itemize}
		\item If $g(x,s) = 0$ and there is $a_i$ with $v(i)[s] = x$, then set $v(i)[s+1] := 0$ (for all such $a_i$).
		
		\item If $g(x,s) = 1$ and there is no $a_i$ with $v(i)[s] = x$, then choose the least fresh $a_{k}$ and define $v(k)[s+1] := x$.
	\end{itemize}
	
	\subsection*{The verification} 
	
	It is clear that for every $i\in\omega$, there exists the limit $v(i) := \lim_s v(i)[s]$. Moreover, consider the first stage $s_0$ such that $v(i)[s_0]$ is defined. Then precisely one of the following two cases holds:
	\begin{itemize}
		\item[(i)] either for all $s\geq s_0$, we have $v(i)[s] = v(i)[s_0] = v(i)$;
		
		\item[(ii)] or $v(i)[s_0] \neq 0$ and there is a stage $s_1 > s_0$ such that 
		\[
			v(i)[s] = \begin{cases}
				v(i)[s_0], & \text{if } s_0 \leq s < s_1,\\
				0, & \text{if } s \geq s_1.
			\end{cases}
		\]
	\end{itemize}
	Therefore, one can deduce that the preorder $S_B$ is c.e.
	
	The fact above and the description of the construction together imply the following:	
	\begin{claim}\label{claim:aux-001}
		Let $x$ be a non-zero natural number. 
		\begin{itemize}
			\item If $x \in B$, then there is a unique $i$ with $v(i) = x$.
			
			\item If $x\not\in B$, then $v(i) \neq x$ for all $i\in\omega$.
		\end{itemize}
		Moreover, there are infinitely many $k$ with $v(k) = 0$.
	\end{claim}
	This claim shows that the isomorphism type of the preorder $S_B$ depends only on the set $B$, but not on the choice of its approximation $g$.
	
	\ 
	
	By Theorem~\ref{theo:KNS}, there is a d.c.e. set $A$, which is not limitwise monotonic. Without loss of generality, we may assume that $0\not\in A$. We prove that the preorder $R:= S_A$ is not isomorphic to any $\Pi^0_1$ preorder.
	
	Towards a contradiction, assume that $Q$ is a $\Pi^0_1$ preorder, which is isomorphic to $R$. Fix an isomorphism $f\colon R\cong Q$. We will slightly abuse our notations, and identify the elements $c$ and $f(c)$, $b_i$ and $f(b_i)$, etc.
	
	In a non-uniform way, one can find the elements $c$ and $d$ inside $Q$.
	We note that:
	\begin{itemize}
		\item If $w \in \{ a_i\,\colon i\in\omega\}$ inside $Q$, then $w$ is $Q$-incomparable with $d$.
		
		\item If $w\in \{ b_j\,\colon j\in\omega\}$, then $w$ is $Q$-incomparable with $c$.
		
		\item Suppose that $u$ and $w$ are different elements from $\{ b_j\,\colon j\in\omega\}$. Then $u,w$ satisfy precisely one of the following: either $u<_Q w$, or $w<_Q u$.
	\end{itemize}
	Since the preorder $Q$ is co-c.e., we deduce the following: one can effectively check whether a given $w$ is an $f$-image of some $a_i$ or of some $b_j$. Moreover, the ordering of $b_j$-s inside $Q$ is recursive.
	
	Consider a computable set $U:=\{ f(a_i)\,\colon i\in\omega\}$, and for each $w\in U$, define the value 
	\[
		p(w) := \text{the number of } b_j \text{-s, which are } Q \text{-incomparable with } w.
	\]
	Since $Q$ is $\Pi^0_1$, the function $p$ is limitwise monotonic. Hence, the set
	\[
		C := \{ p(w)\,\colon w\in U,\ p(w) \geq 1\}
	\]
	is also limitwise monotonic. 
	
	On the other hand, recall that $Q$ is isomorphic to $R=S_A$, and hence, by Claim~\ref{claim:aux-001}, $C$ must be equal to $A$, which contradicts the choice of $A$. Therefore, the preorder $R$ is not isomorphic to a $\Pi^0_1$ preorder.
\end{proof}

\end{document}